\documentclass[12pt,twoside]{amsart}
\usepackage{amsmath, amsthm, amscd, amsfonts, amssymb, graphicx}
\usepackage{enumerate}
\usepackage[colorlinks=true,
linkcolor=blue,
urlcolor=cyan,
citecolor=red]{hyperref}
\usepackage{mathrsfs}
\addtolength{\topmargin}{-1.5cm}
\linespread {1.3}
\textwidth 17cm
\textheight 23cm
\addtolength{\hoffset}{-0.3cm}
\oddsidemargin 0cm
\evensidemargin 0cm
\setcounter{page}{1}
%------------------------------------------------------------------------------------%
\newtheorem{theorem}{Theorem}[section]
\newtheorem{lemma}{Lemma}[section]
\newtheorem{remark}{Remark}[section]

\newtheorem{corollary}{Corollary}[section]

\newtheorem{proposition}{Proposition}[section]
\numberwithin{equation}{section}

%------------------------------------------------------------------------------------%
\begin{document}
	
\title{More accurate numerical radius inequalities (II)}
\author{Hamid Reza Moradi and Mohammad Sababheh}
\subjclass[2010]{Primary 47A12, Secondary 47A30, 15A60, 47A63.}
\keywords{Numerical radius, operator norm, Hermite-Hadamard inequality, operator convex.}

\begin{abstract}
In a recent work of the authors, we showed some general inequalities governing numerical radius inequalities using convex functions. In this article, we present results that complement the aforementioned inequalities. In particular, the new versions can be looked at as refined and generalized forms of some well known numerical radius inequalities. Among many other results, we show that
\[\left\| f\left( \frac{{{A}^{*}}A+A{{A}^{*}}}{4} \right) \right\|\le \left\| \int_{0}^{1}{f\left( \left( 1-t \right){{B}^{2}}+t{{C}^{2}} \right)dt} \right\|\le f\left( {{w}^{2}}\left( A \right) \right),\] when $A$ is a bounded linear operator on a Hilbert space having the Cartesian decomposition $A=B+iC.$ This result, for example, extends and refines a celebrated result by kittaneh.
\end{abstract}
\maketitle
%------------------------------------------------------------------------------------%
\pagestyle{myheadings}
\markboth{\centerline {More accurate numerical radius inequalities (II)}}
{\centerline {H. R. Moradi \& M. Sababheh}}
\bigskip
\bigskip
%------------------------------------------------------------------------------------%
%------------------------------------------------------------------------------------%
\section{Introduction}
Let $\mathcal{B}(\mathcal{H})$ stand for the $C^*$ algebra of all bounded linear operators  on a complex Hilbert space $\mathcal{H}.$ Every $A\in\mathcal{B}(\mathcal{H})$ admits the Cartesian decomposition $A=B+iC,$ in which $B$ and $C$ are self adjoint operators. In this context, an operator $X\in \mathcal{B}(\mathcal{H})$ is said to be self adjoint if $X^*=X$, where $X^*$ is the adjoint operator of $X$.\\
For $A\in\mathcal{B}(\mathcal{H})$, the absolute value $|A|$ is defined by $\left| A \right|={{\left( {{A}^{*}}A \right)}^{\frac{1}{2}}}$. Notice that $|A|$ is a positive semi-definite operator, in the sense that $\left<|A|x,x\right>\geq 0$, for all $x\in\mathcal{H}.$

Among the most interesting numerical values associated with an operator $A\in\mathcal{B}(\mathcal{H})$ are the operator norm $\|A\|$ and the numerical radius $w(A)$ of $A$, defined respectively by
$$\|A\|=\sup_{\|x\|=1}\|Ax\|\;{\text{and}}\;w(A)=\sup_{\|x\|=1}\left|\left<Ax,x\right>\right|.$$

It is easy to see that $\|A\|=\sup\limits_{\|x\|=\|y\|=1}\left|\left<Ax,y\right>\right|.$ Also, it is well known that when $A$ is normal, then $\|A\|=w(A).$ 

If $A$ is not normal, then $\|A\|$ and $w(A)$ are related via the inequalities 
\begin{equation}\label{norm and numerical radius}
\frac{1}{2}\|A\|\leq w(A)\leq \|A\|.
\end{equation}

Research in this direction includes obtaining better bounds in \eqref{norm and numerical radius}. We refer the reader to \cite{10, 9, 8} for a sample of such research.

In  \cite{1}, Kittaneh proved
\begin{equation}\label{kittaneh_first_ineq}
\frac{1}{4}\left\| {{A}^{*}}A+A{{A}^{*}} \right\|\le {{w}^{2}}\left( A \right)\le \frac{1}{2}\left\| {{A}^{*}}A+A{{A}^{*}} \right\|.
\end{equation}

He also proved the following inequality in \cite{2} 
\[w\left( A \right)\le \frac{1}{2}\left\|\; \left| A \right|+\left| {{A}^{*}} \right|\; \right\|\]
and the inequality is reversed if $\frac{1}{2}$ is replaced by $\frac{1}{4}$. In fact, noting that $\|\;|A|\;\|=\|\;|A^*|\;\|$, one has
\begin{equation}\label{6}
\frac{1}{4}\left\|\; \left| A \right|+\left| {{A}^{*}} \right|\; \right\|\le \frac{1}{4}\left( \left\|\; \left| A \right|\; \right\|+\left\| \;\left| {{A}^{*}} \right|\; \right\| \right)=\frac{1}{2}\left\| A \right\|\le w\left( A \right).
\end{equation}

In this article, we target both \eqref{kittaneh_first_ineq} and \eqref{6}; where we show that these inequalities follow from a more general treatment of convex and operator convex functions. We emphasize here that the original treatment of  \eqref{kittaneh_first_ineq} and \eqref{6} did not involve any convexity approach. Therefore, we claim that our results not only are new results but also they introduce a new approach treating such inequalities.

We would like also to mention that this work can be considered as an extension of our earlier work \cite{7}; where a different set of inequalities is targeted. However, we present Theorem \ref{new_thm} and Proposition \ref{new_prop} as refinements of some results in \cite{7}, just to show how the current paper is related to \cite{7}.

For example, we show that for $A\in\mathcal{B}(\mathcal{H})$, we have the double-sided inequality:
\[\frac{1}{4}\left\| {{A}^{*}}A+A{{A}^{*}} \right\|\le {{\left\| \int_{0}^{1}{{{\left( \left( 1-t \right){{B}^{2}}+t{{C}^{2}} \right)}^{2}}dt} \right\|}^{\frac{1}{2}}}\le {{w}^{2}}\left( A \right)\] as a refinement of \eqref{kittaneh_first_ineq}. However, even this last inequality will follow as a special case of the more general inequality that when $f:\left[ 0,\infty  \right)\to \left[ 0,\infty  \right)$ is an increasing operator convex function, then
\[\left\| f\left( \frac{{{A}^{*}}A+A{{A}^{*}}}{4} \right) \right\|\le \left\| \int_{0}^{1}{f\left( \left( 1-t \right){{B}^{2}}+t{{C}^{2}} \right)dt} \right\|\le f\left( {{w}^{2}}\left( A \right) \right),\] where $A=B+iC$ is the Cartesian decomposition of $A$.\\
Another generalization is given for \eqref{6}, in a similar form. Many other generalizations and refinements of some well known results will be presented too. Further, we will present a refined version of \eqref{from 7_1}; as one can see in Theorem \ref{new_thm}.

At this stage, we pay the reader attention that in our recent work \cite{7}, a reverse-type of  the above inequality was shown as follows
\begin{equation}\label{from 7_1}
f\left( w\left( A \right) \right)\le \left\| \int_{0}^{1}{f\left( t\left| A \right|+\left( 1-t \right)\left| {{A}^{*}} \right| \right)dt} \right\|\le \frac{1}{2}\left\| f\left( \left| A \right| \right)+f\left( \left| {{A}^{*}} \right| \right) \right\|.
\end{equation}
Therefore, the current work can be thought of as an extension of some results appearing in \cite{7}.  Further refinement of this last inequality will be shown too.

Independently, we will prove a general result that implies a refinement of the well known inequality \cite{02}
$$w^p(A)\leq \left\| \left( 1-t \right){{\left| A \right|}^{p}}+t{{\left| {{A}^{*}} \right|}^{p}} \right\|,\quad 2\leq p\leq 4,$$ using certain properties of operator convex functions.

In our results, operator convex functions will be an essential assumption. Recall that a function $f:J\to\mathbb{R}$ is said to be operator convex if it is continuous and $f\left(\frac{A+B}{2}\right)\leq \frac{f(A)+f(B)}{2},$ for all self adjoint operators $A,B$ with spectra in the interval $J$. Of course, this implies that for all $0\leq t\leq 1,$ one has $f((1-t)A+tB)\leq (1-t)f(A)+tf(B).$\\
It is this context, if $f:J\to\mathbb{R}$ is a given function and $A$ is a self adjoint operator with spectrum in $J$, then $f(A)$ is defined via functional calculus. 

It can be easily seen that when $f$ is an increasing function, then $\|f(|X|)\|=f(\|X\|)$ for the self adjoint operator $X$.

It is well known that a convex function in the usual sense is not necessarily operator convex, and it is also known that the function $f(t)=t^r, r>0$ defined on $[0,\infty)$ is operator convex if and only if $r\in [1,2]$. We refer the reader to \cite{11} for related literature about operator convex functions.

It is also readily seen that an operator convex function $f:J\to\mathbb{R}$ is also convex. Therefore, such functions comply with the Hermite-Hadamard inequality 
\begin{equation}\label{HH_ineq}
f\left(\frac{a+b}{2}\right)\leq\int_{0}^{1}f((1-t)a+tb)dt\leq \frac{f(a)+f(b)}{2},\;a,b\in J.
\end{equation}
Notice that \eqref{HH_ineq} is a refinement of the convex inequality $f\left(\frac{a+b}{2}\right)\leq\frac{f(a)+f(b)}{2}.$\\
The following modified operator version of \eqref{HH_ineq} was proved in \cite{4}:
\begin{equation}\label{1}
\begin{aligned}
f\left( \frac{X+Y}{2} \right)&\le \frac{1}{2}\left[ f\left( \frac{3X+Y}{4} \right)+f\left( \frac{X+3Y}{4} \right) \right] \\ 
& \le \int_{0}^{1}{f\left( \left( 1-t \right)X+tY \right)dt} \\ 
& \le \frac{1}{2}\left[ f\left( \frac{X+Y}{2} \right)+\frac{f\left( X \right)+f\left( Y \right)}{2} \right] \\ 
& \le \frac{f\left( X \right)+f\left( Y \right)}{2}  
\end{aligned}
\end{equation}
where $f:J\to \mathbb{R}$ is an operator convex function and $X, Y$ are two self adjoint operators with spectra in $J$.

Our proofs will rely heavily on properties of convex functions and their role with inner product. Recall that if $f:J\to\mathbb{R}$ is a convex function and $A$ is a self adjoint operator with spectrum in $J$, then one has \cite{6}
\begin{equation}\label{convex_inner}
f\left( \left\langle Ax,x \right\rangle  \right)\le \left\langle f\left( A \right)x,x \right\rangle
\end{equation}
for any unit vector $x\in \mathcal{H}$.

\section{Main Results}
In this section, we present our main results, which are mainly to extend \eqref{kittaneh_first_ineq} and \eqref{6}. The main results are shown in Proposition  \ref{thm_second_main} and Theorem \ref{thm_first_main} . However, the connection with \eqref{kittaneh_first_ineq} and \eqref{6} are given in Corollaries \ref{cor_first} and \ref{cor_second}.

We would like also to mention that the assumption operator convexity will be released to scalar convexity in Proposition \ref{prop_convex}, but this will lead to a weaker form.

\subsection{Some related norm inequalities}
In \eqref{1}, replacing $X$ by $\frac{1}{2}|A|$ and $Y$ by $\frac{1}{2}|B|$, then noting that when $f$ is an increasing non-negative function, one has $\|f(|A|)\|=f(\|A\|)$, we reach the following inequalities. 
\begin{proposition}\label{thm_second_main}
Let $A,B\in \mathcal{B}\left( \mathcal{H} \right)$. If $f:\left[ 0,\infty  \right)\to \left[ 0,\infty  \right)$ is an increasing operator convex function, then
\begin{align*}
\begin{aligned}
 \left\| f\left( \frac{\left| A \right|+\left| B \right|}{4} \right) \right\|&\le   \left\| \int_{0}^{1}{f\left( \frac{\left( 1-t \right)\left| A \right|+t\left| B \right|}{2} \right)dt} \right\| \\ 
& \le \frac{1}{2}f\left( \frac{1}{2}\left\| A \right\| \right)+\frac{1}{2}f\left( \frac{1}{2}\left\| B \right\| \right).  
\end{aligned}
\end{align*}
\end{proposition}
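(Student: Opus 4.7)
The plan is to specialize the operator Hermite--Hadamard chain \eqref{1} to the operators $X=\tfrac{1}{2}|A|$ and $Y=\tfrac{1}{2}|B|$, and then pass to the operator norm --- exactly as signposted in the sentence preceding the statement.

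First I would verify the substitution: since $|A|$ and $|B|$ are positive operators, the choices $X=\tfrac{1}{2}|A|$ and $Y=\tfrac{1}{2}|B|$ have spectra contained in $[0,\infty)=J$, so \eqref{1} applies. For this proposition only the two outermost inequalities in the chain are needed, which after substitution read
\[
f\!\left(\frac{|A|+|B|}{4}\right) \;\le\; \int_{0}^{1} f\!\left(\frac{(1-t)|A|+t|B|}{2}\right) dt \;\le\; \frac{f(|A|/2)+f(|B|/2)}{2}
\]
as an operator inequality.

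Next I would pass to operator norms. Because $f:[0,\infty)\to[0,\infty)$, every operator appearing above is positive, and for positive operators the standard fact $0\le P\le Q \Longrightarrow \|P\|\le \|Q\|$ (which follows from $\|P\|=\sup_{\|x\|=1}\langle Px,x\rangle$) immediately delivers the left inequality of the proposition. For the right inequality I would apply the triangle inequality in the form
\[
\left\|\frac{f(|A|/2)+f(|B|/2)}{2}\right\| \;\le\; \frac{1}{2}\|f(|A|/2)\| + \frac{1}{2}\|f(|B|/2)\|,
\]
and then invoke the identity $\|f(|X|)\|=f(\|X\|)$ recalled in the preliminaries. Applied to $X=|A|/2$, which is positive and hence equal to its own absolute value, this yields $\|f(|A|/2)\| = f(\tfrac{1}{2}\|A\|)$, and analogously for $B$. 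Combining these gives precisely the right-hand side of the claimed inequality.

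There is essentially no hard step here: the whole argument is a clean specialization of \eqref{1} followed by norm bookkeeping. The only items that warrant slight care are confirming that $f$ maps positive operators to positive operators (immediate from $f\ge 0$ on $[0,\infty)$ via functional calculus), and that the formula $\|f(|X|)\|=f(\|X\|)$ indeed applies to the substituted operators --- which it does, because $|A|/2$ and $|B|/2$ are themselves positive, so they coincide with their own absolute values and $\||A|/2\|=\tfrac{1}{2}\|A\|$, $\||B|/2\|=\tfrac{1}{2}\|B\|$.
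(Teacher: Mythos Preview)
Your proposal is correct and follows exactly the approach the paper indicates: substitute $X=\tfrac12|A|$, $Y=\tfrac12|B|$ into the operator Hermite--Hadamard chain \eqref{1}, keep the outer inequalities, and then pass to norms using $\|f(|X|)\|=f(\|X\|)$ together with the triangle inequality. You have simply made the norm bookkeeping more explicit than the paper's one-sentence sketch.
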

Notice that we did not consider all inequalities appearing in \eqref{1}. Although using the other inequalities would imply further refinements, our goal in this article is to show the idea, away from getting into unnecessary computations.

We would like to emphasize that the significance of the  inequalities in Proposition \ref{thm_second_main} is not the inequalities themselves, but their applications in obtaining numerical radius and norm  inequalities  refining \eqref{kittaneh_first_ineq} and \eqref{6}.

Now letting $B=A^*$ in Proposition \ref{thm_second_main}, we obtain the following extension and refinement  of \eqref{6}. The proof follows immediately noting that $\|A\|=\|\;|A|\;\|=\|\;|A^*|\;\|.$

\begin{corollary}\label{cor_second}
Let $A\in \mathcal{B}\left( \mathcal{H} \right)$. Then for any $1\le r\le 2$,
\[\frac{1}{{{4}^{r}}}{{\left\|\; \left| A \right|+\left| {{A}^{*}} \right|\; \right\|}^{r}}\le \left\| \int_{0}^{1}{{{\left( \frac{\left( 1-t \right)\left| A \right|+t\left| {{A}^{*}} \right|}{2} \right)}^{r}}dt} \right\|\le \frac{1}{{{2}^{r}}}{{\left\| A \right\|}^{r}}.\]
In particular,
\[\frac{1}{4}\left\| \;\left| A \right|+\left| {{A}^{*}} \right| \;\right\|\le {{\left\| \int_{0}^{1}{{{\left( \frac{\left( 1-t \right)\left| A \right|+t\left| {{A}^{*}} \right|}{2} \right)}^{2}}dt} \right\|}^{\frac{1}{2}}}\le \frac{1}{2}\left\| A \right\|.\]
\end{corollary}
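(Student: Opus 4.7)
The corollary is a direct specialization of Proposition \ref{thm_second_main}: I take $B = A^{*}$ and $f(x) = x^{r}$ on $[0,\infty)$. The paper has already recorded that $f(x)=x^{r}$ is operator convex (and obviously increasing, non-negative) precisely when $r \in [1,2]$, which is the hypothesis here, so $f$ fits the assumptions of Proposition \ref{thm_second_main}. Plugging in directly turns the middle term of that proposition into the middle term of the corollary, so the only real work is to identify the two outer terms in the form stated.

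\textbf{Left-hand side.} After substitution the left-hand side of Proposition \ref{thm_second_main} becomes $\left\|\bigl((|A|+|A^{*}|)/4\bigr)^{r}\right\|$. Since $|A|+|A^{*}|$ is positive semi-definite and $x \mapsto x^{r}$ is increasing and non-negative on $[0,\infty)$, the identity $\|f(|X|)\| = f(\|X\|)$ noted in the introduction yields
\[
\left\| \left(\tfrac{|A|+|A^{*}|}{4}\right)^{\!r} \right\|
= \left(\tfrac{1}{4}\|\,|A|+|A^{*}|\,\|\right)^{\!r}
= \tfrac{1}{4^{r}}\,\|\,|A|+|A^{*}|\,\|^{r},
\]
which matches the left-hand side of the corollary.

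\textbf{Right-hand side.} The right-hand side of Proposition \ref{thm_second_main} with $B=A^{*}$ and $f(x)=x^{r}$ reads $\tfrac{1}{2}f(\tfrac12\|A\|)+\tfrac{1}{2}f(\tfrac12\|A^{*}\|)$. Using $\|A\|=\|A^{*}\|$, this collapses to $f(\tfrac12\|A\|) = \tfrac{1}{2^{r}}\|A\|^{r}$, giving exactly the right-hand side of the corollary. Chaining these two identifications through Proposition \ref{thm_second_main} yields the main inequality. The ``in particular'' case is simply the instance $r = 2$, after taking square roots.

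\textbf{Obstacles.} There really aren't any serious ones: the proof is a substitution. The only point that merits a line of justification is the conversion $\|\bigl((|A|+|A^{*}|)/4\bigr)^{r}\| = \tfrac{1}{4^{r}}\|\,|A|+|A^{*}|\,\|^{r}$, which uses positivity of $|A|+|A^{*}|$ together with the monotonicity of $x\mapsto x^{r}$; and the observation that $r\in[1,2]$ is precisely the range in which $x^{r}$ is operator convex, ensuring Proposition \ref{thm_second_main} applies.
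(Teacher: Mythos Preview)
Your proposal is correct and follows exactly the approach the paper takes: substitute $B=A^{*}$ and $f(x)=x^{r}$ in Proposition~\ref{thm_second_main}, then use $\|A\|=\|A^{*}\|$ and $\|f(|X|)\|=f(\|X\|)$ to simplify the outer terms. You have in fact spelled out the details more carefully than the paper's one-line remark.
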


Notice that operator convexity of $f$ is a necessary condition, since it is so in \eqref{1}. In the next result, we show the convex version of Proposition \ref{thm_second_main}. 

\begin{theorem}\label{prop_convex}
Let $A,B\in \mathcal{B}\left( \mathcal{H} \right)$. If $f:\left[ 0,\infty  \right)\to \left[ 0,\infty  \right)$ is a  convex function, then
\begin{equation}\label{1st_concl_cov_thm}
\begin{aligned}
f\left(\left<\frac{|A|+|B|}{2}x,x\right>\right)&\leq \int_{0}^{1}f\left(\left\|((1-t)|A|+t|B|)^{1/2}x\right\|^2\right)dt\\
&\leq \frac{1}{2}\left\|f(|A|)+f(|B|)\right\|,
\end{aligned}
\end{equation}
for any unit vector $x\in\mathcal{H}$, and 
\begin{equation}\label{2ndd_concl_cov_thm}
 f\left( \left\|\frac{\left| A \right|+\left| B \right|}{2} \right\|\right) \le \underset{\left\| x \right\|=1}{\mathop{\underset{x\in \mathcal{H}}{\mathop{\sup }}\,}}\,\int_{0}^{1}{f\left( {{\left\| {{\left( \left( 1-t \right)\left| A \right|+t\left| B \right| \right)}^{\frac{1}{2}}}x \right\|}^{2}} \right)dt\le \frac{1}{2}\left\| f\left( \left| A \right| \right)+f\left( \left| B \right| \right) \right\|}.
\end{equation}
Further, if $f$ is increasing then
\begin{equation}\label{2nd_concl_cov_thm}
\left\| f\left( \frac{\left| A \right|+\left| B \right|}{2} \right) \right\|\le \underset{\left\| x \right\|=1}{\mathop{\underset{x\in \mathcal{H}}{\mathop{\sup }}\,}}\,\int_{0}^{1}{f\left( {{\left\| {{\left( \left( 1-t \right)\left| A \right|+t\left| B \right| \right)}^{\frac{1}{2}}}x \right\|}^{2}} \right)dt\le \frac{1}{2}\left\| f\left( \left| A \right| \right)+f\left( \left| B \right| \right) \right\|}.
\end{equation}

\end{theorem}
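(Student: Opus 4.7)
The plan is to establish the three displayed inequalities in the order they appear, proving (1.9) first for each fixed unit vector, then taking suprema to obtain (1.10), and finally applying the identity $\|f(|X|)\|=f(\|X\|)$ to deduce (1.11).

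For (1.9), fix a unit vector $x\in\mathcal{H}$ and set $a=\langle |A|x,x\rangle$ and $b=\langle |B|x,x\rangle$; both lie in $[0,\infty)$ since $|A|,|B|$ are positive. The crucial observation is that
\[
\|((1-t)|A|+t|B|)^{1/2}x\|^{2}=\langle((1-t)|A|+t|B|)x,x\rangle=(1-t)a+tb,
\]
which turns the middle quantity of (1.9) into the scalar integral $\int_0^1 f((1-t)a+tb)\,dt$. Applying the scalar Hermite--Hadamard inequality \eqref{HH_ineq} to the endpoints $a$ and $b$ immediately yields
\[
f\!\left(\tfrac{a+b}{2}\right)\le \int_0^1 f((1-t)a+tb)\,dt\le \tfrac{f(a)+f(b)}{2}.
\]
The left end equals $f(\langle\tfrac{|A|+|B|}{2}x,x\rangle)$, which is what we want. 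For the right end, I would apply \eqref{convex_inner} to each scalar: $f(a)\le \langle f(|A|)x,x\rangle$ and $f(b)\le \langle f(|B|)x,x\rangle$. Averaging and bounding the inner product by the norm gives $\tfrac12\|f(|A|)+f(|B|)\|$, completing (1.9).

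For (1.10), the right-hand inequality is automatic on taking $\sup_{\|x\|=1}$ in (1.9) since its right-hand side is independent of $x$. For the left-hand inequality, I would choose a sequence of unit vectors $\{x_n\}$ with $\langle\tfrac{|A|+|B|}{2}x_n,x_n\rangle\to\|\tfrac{|A|+|B|}{2}\|$, which exists because $\tfrac{|A|+|B|}{2}$ is positive self-adjoint. A convex function on $[0,\infty)$ is continuous on $(0,\infty)$, so $f(\langle\tfrac{|A|+|B|}{2}x_n,x_n\rangle)\to f(\|\tfrac{|A|+|B|}{2}\|)$ (the degenerate case $\|\tfrac{|A|+|B|}{2}\|=0$ makes everything trivial). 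For each $n$, the first inequality of (1.9) bounds this sequence above by $\int_0^1 f(\|((1-t)|A|+t|B|)^{1/2}x_n\|^2)\,dt\le\sup_{\|x\|=1}\int_0^1 f(\cdots)\,dt$, and letting $n\to\infty$ finishes (1.10).

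Finally, for (1.11), the paper has already noted that if $f$ is increasing then $\|f(|X|)\|=f(\|X\|)$ for any self-adjoint $X$. Since $\tfrac{|A|+|B|}{2}$ is positive (hence its own absolute value), this gives $\|f(\tfrac{|A|+|B|}{2})\|=f(\|\tfrac{|A|+|B|}{2}\|)$, and (1.11) follows by substituting this identity into the left end of (1.10). I expect the main obstacle to be the continuity subtlety in the $\sup$-passage of (1.10)---a convex function on $[0,\infty)$ can fail continuity at $0$---but as indicated this is easily circumvented since the case $\|\tfrac{|A|+|B|}{2}\|=0$ reduces (1.10) to $f(0)\le f(0)$.
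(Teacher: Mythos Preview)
Your proof is correct and follows essentially the same route as the paper: rewrite $\|((1-t)|A|+t|B|)^{1/2}x\|^{2}$ as the scalar $(1-t)\langle|A|x,x\rangle+t\langle|B|x,x\rangle$, apply the scalar Hermite--Hadamard inequality, then use \eqref{convex_inner} and pass to the supremum. The only difference is cosmetic: for the left inequality of \eqref{2ndd_concl_cov_thm} the paper simply asserts $\sup_{\|x\|=1}f(\langle\cdot\rangle)\ge f(\sup_{\|x\|=1}\langle\cdot\rangle)$ without comment, whereas you spell out the approximating-sequence argument and handle the continuity-at-$0$ edge case explicitly---a welcome bit of extra rigor.
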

\begin{proof}
Let $x\in\mathcal{H}$ be a unit vector. Then \eqref{HH_ineq} implies

\begin{equation}\label{needed_1_conv_thm}
\begin{aligned}
f\left(\left<\frac{|A|+|B|}{2}x,x\right>\right)&=f\left(\frac{\left<|A|x,x\right>+\left<|B|x,x\right>}{2}\right)\\
&\leq \int_{0}^{1}f\left((1-t)\left<|A|x,x\right>+t\left<|B|x,x\right>\right)dt\\
&\leq \frac{f\left(\left<|A|x,x\right>\right)+f\left(\left<|A|x,x\right>\right)}{2}.
\end{aligned}
\end{equation}
Denoting $\int_{0}^{1}f\left((1-t)\left<|A|x,x\right>+t\left<|B|x,x\right>\right)dt$ by $I$, we have
\begin{equation}\label{needed_2_conv_thm}
\begin{aligned}
I&=\int_{0}^{1}f\left(\left<((1-t)|A|+t|B|)x,x\right>\right)dt\\
&=\int_{0}^{1}f\left(\left<((1-t)|A|+t|B|)^{1/2}x,((1-t)|A|+t|B|)^{1/2}x\right>\right)dt\\
&=\int_{0}^{1}f\left(\left\|((1-t)|A|+t|B|)^{1/2}x\right\|^2\right)dt.\\
\end{aligned}
\end{equation}
Further, noting convexity of $f$ and \eqref{convex_inner} we have
\begin{equation}\label{needed_3_conv_thm}
\begin{aligned}
\frac{f\left(\left<|A|x,x\right>\right)+f\left(\left<|B|x,x\right>\right)}{2}&\leq\frac{\left<f(|A|)x,x\right>+\left<f(|B|)x,x\right>}{2}\\
&=\frac{1}{2}\left<\left(f(|A|)+f(|B|)\right)x,x\right>\\
&\leq \frac{1}{2}\left\|f(|A|)+f(|B|)\right\|.
\end{aligned}
\end{equation}
Combining \eqref{needed_1_conv_thm}, \eqref{needed_2_conv_thm} and \eqref{needed_3_conv_thm}, we obtain
\begin{equation}
\begin{aligned}
f\left(\left<\frac{|A|+|B|}{2}x,x\right>\right)&\leq \int_{0}^{1}f\left(\left\|((1-t)|A|+t|B|)^{1/2}x\right\|^2\right)dt\\
&\leq \frac{1}{2}\left\|f(|A|)+f(|B|)\right\|.
\end{aligned}
\end{equation}
This proves \eqref{1st_concl_cov_thm}.\\
To prove \eqref{2ndd_concl_cov_thm}, notice that for any such $f$,
\begin{align*}
\underset{\left\| x \right\|=1}{\mathop{\underset{x\in \mathcal{H}}{\mathop{\sup }}\,}}f\left(\left<\frac{|A|+|B|}{2}x,x\right>\right)&\geq f\left(\underset{\left\| x \right\|=1}{\mathop{\underset{x\in \mathcal{H}}{\mathop{\sup }}\,}}\left<\frac{|A|+|B|}{2}x,x\right>\right)\\
&=f\left(\left\|\frac{|A|+|B|}{2}\right\|\right).
\end{align*}

To prove \eqref{2nd_concl_cov_thm}, notice that when $f$ is increasing then,
\begin{align*}
\underset{\left\| x \right\|=1}{\mathop{\underset{x\in \mathcal{H}}{\mathop{\sup }}\,}}f\left(\left<\frac{|A|+|B|}{2}x,x\right>\right)&= f\left(\underset{\left\| x \right\|=1}{\mathop{\underset{x\in \mathcal{H}}{\mathop{\sup }}\,}}\left<\frac{|A|+|B|}{2}x,x\right>    \right)\\
&=f\left(\left\|\frac{|A|+|B|}{2}\right\|\right)\\
&=\left\|f\left(\frac{|A|+|B|}{2}\right)\right\|
\end{align*}
where we have used the fact that   $\|f(|X|)\|=f(\|X\|)$ when $f$ is increasing in the last line. This together with \eqref{1st_concl_cov_thm} imply \eqref{2nd_concl_cov_thm}.
 
\end{proof}

In \cite[Corollary 2.2]{bourin}, it is shown that for the increasing convex function $f:[0,\infty)\to [0,\infty),$ one has 
$$f\left(\frac{|A|+|B|}{2}\right)\leq U\frac{f(|A|)+f(|B|)}{2}U^*,$$ for some unitary matrix $U$. Notice that this inequality implies that
$$\left\|f\left(\frac{|A|+|B|}{2}\right)\right\|\leq \frac{1}{2}\left\|f(|A|)+f(|B|)\right\|.$$
It is clear that \eqref{2nd_concl_cov_thm} provides a refinement of this inequality.

\subsection{Sharper lower bounds of the numerical radius}

In order to present our next main result (Theorem \ref{thm_first_main}), we will need the following Lemma.

\begin{lemma}\label{3}
Let $A\in \mathcal{B}\left( \mathcal{H} \right)$ have the Cartesian decomposition $A=B+iC$. Then
\[{{\left\| B \right\|}^{2}},{{\left\| C \right\|}^{2}}\le {{w}^{2}}\left( A \right).\]
\end{lemma}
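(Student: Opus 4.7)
The plan is to exploit the fact that $B$ and $C$ are self-adjoint, for which the operator norm coincides with the numerical radius, and then to observe that the real and imaginary parts of $\langle Ax,x\rangle$ are separately bounded in absolute value by $|\langle Ax,x\rangle|$.

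First I would recall that in the Cartesian decomposition $A = B+iC$, we have $B = \frac{A+A^*}{2}$ and $C = \frac{A-A^*}{2i}$, both of which are self-adjoint. For any self-adjoint $T \in \mathcal{B}(\mathcal{H})$ it is standard that $\|T\| = w(T)$, so in particular $\|B\| = w(B)$ and $\|C\| = w(C)$. Thus it suffices to prove the two numerical radius inequalities $w(B) \leq w(A)$ and $w(C) \leq w(A)$, and then square.

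Next, for any unit vector $x \in \mathcal{H}$, I would compute
\[
\langle Ax,x\rangle = \langle Bx,x\rangle + i\langle Cx,x\rangle,
\]
noting that $\langle Bx,x\rangle$ and $\langle Cx,x\rangle$ are both real because $B$ and $C$ are self-adjoint. Consequently,
\[
|\langle Bx,x\rangle| = |\operatorname{Re}\langle Ax,x\rangle| \leq |\langle Ax,x\rangle|,
\]
and similarly $|\langle Cx,x\rangle| \leq |\langle Ax,x\rangle|$. Taking the supremum over unit vectors $x$ yields $w(B) \leq w(A)$ and $w(C) \leq w(A)$, hence $\|B\| \leq w(A)$ and $\|C\| \leq w(A)$. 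Squaring both inequalities gives the claim.

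There is really no main obstacle here; the whole argument hinges on the two elementary facts that (i) the operator norm and numerical radius agree on self-adjoint operators, and (ii) $|\operatorname{Re}(z)|, |\operatorname{Im}(z)| \leq |z|$ for any complex $z$. The only thing one should double-check is the identification of the real and imaginary parts of $\langle Ax,x\rangle$ with $\langle Bx,x\rangle$ and $\langle Cx,x\rangle$, which is immediate from $A = B + iC$ and the self-adjointness of $B$ and $C$.
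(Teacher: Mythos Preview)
Your proof is correct and essentially identical to the paper's: the paper also writes $\langle Ax,x\rangle=\langle Bx,x\rangle+i\langle Cx,x\rangle$, deduces $\langle Bx,x\rangle^{2}+\langle Cx,x\rangle^{2}=|\langle Ax,x\rangle|^{2}$ (equivalent to your $|\operatorname{Re} z|,|\operatorname{Im} z|\le |z|$), and then takes the supremum using $\|B\|=w(B)$. The only cosmetic difference is that the paper squares before taking the supremum whereas you take the supremum first and square afterward.
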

\begin{proof}
If $A=B+iC$ be the Cartesian decomposition of $A$, then for any unit vector $x\in \mathcal{H}$,
\[{{\left\langle Bx,x \right\rangle }^{2}}+{{\left\langle Cx,x \right\rangle }^{2}}={{\left| \left\langle Ax,x \right\rangle  \right|}^{2}}.\]
Therefore,
\[{{\left\langle Bx,x \right\rangle }^{2}}\le {{\left| \left\langle Ax,x \right\rangle  \right|}^{2}}.\]
By taking supremum over $x\in \mathcal{H}$ with $\left\| x \right\|=1$, 
\[{{\left\| B \right\|}^{2}}={{w}^{2}}\left( B \right)\le {{w}^{2}}\left( A \right).\]
Similarly one can prove ${{\left\| C \right\|}^{2}}\le {{w}^{2}}\left( A \right)$. This completes the proof.
\end{proof}

Now Proposition \ref{thm_second_main} is utilized with the Cartesian decomposition of $A$ to obtain the following generalized form of \eqref{kittaneh_first_ineq}. This shows the significance of the proposition.
\begin{theorem}\label{thm_first_main}
Let $A\in \mathcal{B}\left( \mathcal{H} \right)$ have the Cartesian decomposition $A=B+iC$. If $f:\left[ 0,\infty  \right)\to \left[ 0,\infty  \right)$ is an increasing operator convex function, then
\begin{align*}
\left\| f\left( \frac{{{A}^{*}}A+A{{A}^{*}}}{4} \right) \right\|&\le \left\| \int_{0}^{1}{f\left( \left( 1-t \right){{B}^{2}}+t{{C}^{2}} \right)dt} \right\|\\
&\le \frac{1}{2}\left\| f\left( {{B}^{2}} \right)+f\left( {{C}^{2}} \right) \right\|\\
&\le f\left( {{w}^{2}}\left( A \right) \right).
\end{align*}

\end{theorem}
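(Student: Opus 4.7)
The plan is to reduce the entire chain of inequalities to a single application of the operator Hermite--Hadamard inequality \eqref{1}, combined with Lemma \ref{3}.

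First I would exploit the Cartesian decomposition. A direct expansion using $A^* = B - iC$ gives the identity $A^*A + AA^* = 2(B^2+C^2)$, so that $\frac{A^*A+AA^*}{4} = \frac{B^2+C^2}{2}$. Since $B$ and $C$ are self-adjoint, the operators $B^2$ and $C^2$ are positive and thus sit in the domain $[0,\infty)$ of $f$. I would then apply the outer inequalities of \eqref{1}, which are valid because $f$ is operator convex, with $X = B^2$ and $Y = C^2$, obtaining the operator inequalities
\[
f\!\left(\frac{B^2+C^2}{2}\right) \;\le\; \int_0^1 f\bigl((1-t)B^2 + tC^2\bigr)\,dt \;\le\; \frac{f(B^2)+f(C^2)}{2}.
\]
All three operators are positive, since $f$ takes values in $[0,\infty)$, so taking $\|\cdot\|$ throughout (using monotonicity of the operator norm on positive operators) yields the first two inequalities of the theorem after identifying $\frac{B^2+C^2}{2}$ with $\frac{A^*A+AA^*}{4}$.

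For the terminal inequality $\tfrac12\|f(B^2)+f(C^2)\| \le f(w^2(A))$, I would use the triangle inequality $\|f(B^2)+f(C^2)\| \le \|f(B^2)\|+\|f(C^2)\|$ and then invoke the identity $\|f(T)\|=f(\|T\|)$, valid for any positive operator $T$ and any increasing non-negative continuous $f$; this is a mild extension of the identity $\|f(|X|)\|=f(\|X\|)$ recalled in the introduction, and it yields $\|f(B^2)\| = f(\|B\|^2)$ and analogously $\|f(C^2)\| = f(\|C\|^2)$. Lemma \ref{3} then supplies $\|B\|^2,\,\|C\|^2 \le w^2(A)$, and monotonicity of $f$ gives $f(\|B\|^2),\, f(\|C\|^2)\le f(w^2(A))$, which closes the chain.

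No step is analytically delicate; the argument is essentially an assembly of \eqref{1}, Lemma \ref{3}, and norm monotonicity on positives. The only small subtlety is organizational: one must recognize that the stated intermediate quantity $\tfrac{1}{2}\|f(B^2)+f(C^2)\|$ arises most naturally from the \emph{operator} form of the Hermite--Hadamard upper bound, \emph{before} splitting via the triangle inequality, so that Lemma \ref{3} may be applied to $\|B\|$ and $\|C\|$ separately. This is what allows the bound to terminate at $f(w^2(A))$ rather than at the weaker sum of scalar norms produced by applying Proposition \ref{thm_second_main} directly.
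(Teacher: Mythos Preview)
Your proposal is correct and follows essentially the same route as the paper: apply the operator Hermite--Hadamard inequality \eqref{1} with $X=B^{2}$, $Y=C^{2}$ and the identity $\frac{A^{*}A+AA^{*}}{4}=\frac{B^{2}+C^{2}}{2}$, then use the triangle inequality, $\|f(|X|)\|=f(\|X\|)$, and Lemma~\ref{3} for the last step. Your choice to invoke \eqref{1} directly (rather than substitute into Proposition~\ref{thm_second_main}) is in fact slightly cleaner, since it yields the operator inequality whose norm produces the intermediate bound $\tfrac{1}{2}\|f(B^{2})+f(C^{2})\|$ exactly as stated, whereas the right-hand side of Proposition~\ref{thm_second_main} is already the post--triangle-inequality scalar sum.
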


\begin{proof}
In Theorem \ref{thm_second_main}, replace $|A|$ by $2B^2$ and $|B|$ by $2C^2.$ Then direct application of Theorem \ref{thm_second_main} implies the first and second inequalities.\\
For the third inequality, notice that
\begin{align*}
\frac{1}{2}\left\| f\left( {{B}^{2}} \right)+f\left( {{C}^{2}} \right) \right\|&\leq \frac{1}{2}\left(\|f(B^2)\|+\|f(C^2)\|\right)\quad \text{(by the triangle inequality)}\\
&= \frac{1}{2}\left(f(\|B^2\|)+f(\|C^2\|)\right)\quad \text{(since $\left\| f\left( \left| X \right| \right) \right\|=f\left( \left\| X \right\| \right)$)}\\
&\leq f\left( {{w}^{2}}\left( A \right) \right)\quad \text{(by Lemma \ref{3})}.
\end{align*}
This completes the proof.
\end{proof}

Noting that the function $f(t)=t^r$ is an increasing operator convex function when $1\leq r\leq 2,$ Theorem \ref{thm_first_main} implies the following extension and refinement of the first inequality in \eqref{kittaneh_first_ineq}.
\begin{corollary}\label{cor_first}
Let $A\in \mathcal{B}\left( \mathcal{H} \right)$ with the Cartesian decomposition $A=B+iC$. Then for any $1\le r\le 2$,
\[\frac{1}{{{4}^{r}}}{{\left\| {{A}^{*}}A+A{{A}^{*}} \right\|}^{r}}\le \left\| \int_{0}^{1}{{{\left( \left( 1-t \right){{B}^{2}}+t{{C}^{2}} \right)}^{r}}dt} \right\|\le {{w}^{2r}}\left( A \right).\]
In particular,
\[\frac{1}{4}\left\| {{A}^{*}}A+A{{A}^{*}} \right\|\le {{\left\| \int_{0}^{1}{{{\left( \left( 1-t \right){{B}^{2}}+t{{C}^{2}} \right)}^{2}}dt} \right\|}^{\frac{1}{2}}}\le {{w}^{2}}\left( A \right).\]
\end{corollary}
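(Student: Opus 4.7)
The plan is to apply Theorem \ref{thm_first_main} directly with the specific choice $f(t) = t^r$ for $1 \le r \le 2$. The paper has already recorded that $t \mapsto t^r$ on $[0,\infty)$ is operator convex exactly when $r \in [1,2]$, and for such $r$ the function is also increasing and maps $[0,\infty)$ into itself, so it lies in the class of functions to which Theorem \ref{thm_first_main} applies.

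Next I would translate each of the three terms appearing in Theorem \ref{thm_first_main} under this choice. The middle term transforms cosmetically into $\left\|\int_{0}^{1}\left((1-t)B^{2}+tC^{2}\right)^{r}dt\right\|$, and the right-most term becomes $f(w^{2}(A)) = w^{2r}(A)$. The left-most term requires the small observation that $(A^{*}A + AA^{*})/4$ is a positive operator, so by functional calculus (or the spectral radius formula for positive operators)
\[\left\|\left(\frac{A^{*}A+AA^{*}}{4}\right)^{r}\right\| \;=\; \left\|\frac{A^{*}A+AA^{*}}{4}\right\|^{r} \;=\; \frac{1}{4^{r}}\left\|A^{*}A+AA^{*}\right\|^{r}.\]
Assembling these three identifications inside the chain of inequalities in Theorem \ref{thm_first_main} yields the first displayed inequality of the corollary.

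For the ``in particular'' assertion, I would simply take $r=2$ (noting $2 \in [1,2]$), giving
\[\frac{1}{16}\left\|A^{*}A+AA^{*}\right\|^{2} \;\le\; \left\|\int_{0}^{1}\left((1-t)B^{2}+tC^{2}\right)^{2}dt\right\| \;\le\; w^{4}(A),\]
and then take square roots throughout; all three quantities are nonnegative, so this is legitimate and produces exactly the stated refinement of \eqref{kittaneh_first_ineq}.

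There is essentially no obstacle here: the corollary is a one-line specialization once one remembers (i) that $t^{r}$ is operator convex for $r \in [1,2]$ and (ii) the identity $\|T^{r}\| = \|T\|^{r}$ for a positive operator $T$. The only thing worth flagging is that for the left-most term one must use positivity of $(A^{*}A + AA^{*})/4$ to move the $r$-th power outside the norm; this is what allows the factor $1/4^{r}$ to surface cleanly.
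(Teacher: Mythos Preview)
Your proposal is correct and matches the paper's approach exactly: the corollary is stated in the paper as an immediate consequence of Theorem~\ref{thm_first_main} applied to $f(t)=t^{r}$, and your write-up simply makes explicit the routine identifications (including $\|T^{r}\|=\|T\|^{r}$ for positive $T$ and the square-root step for $r=2$) that the paper leaves implicit.
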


\subsection{Sharper upper bounds of the numerical radius}
In \cite[Theorem 2.1]{7} it has been shown that
\begin{equation}\label{needed_fom_first_paper}
f\left( w\left( A \right) \right)\le \left\| \int_{0}^{1}{f\left( t\left| A \right|+\left( 1-t \right)\left| {{A}^{*}} \right| \right)dt} \right\|\le \frac{1}{2}\left\| f\left( \left| A \right| \right)+f\left( \left| {{A}^{*}} \right| \right) \right\|
\end{equation}
where $f:\left[ 0,\infty  \right)\to \left[ 0,\infty  \right)$ is an increasing operator convex function. This can be improved in the following theorem.

\begin{theorem}\label{new_thm}
	Let $A\in \mathcal{B}\left( \mathcal{H} \right)$. If $f:\left[ 0,\infty  \right)\to \left[ 0,\infty  \right)$ is an increasing convex function, then
\[f\left( w\left( A \right) \right)\le \frac{1}{2}\left\| f\left( \frac{3\left| A \right|+\left| {{A}^{*}} \right|}{4} \right)+f\left( \frac{\left| A \right|+3\left| {{A}^{*}} \right|}{4} \right) \right\|.\]
\end{theorem}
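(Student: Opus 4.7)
The plan is to reduce to a scalar inequality involving $\langle|A|x,x\rangle$ and $\langle|A^{*}|x,x\rangle$ for each unit vector $x$, and then pass back to operators via \eqref{convex_inner} before taking a supremum over $\|x\|=1$.

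First I would fix a unit vector $x\in\mathcal{H}$ and invoke the mixed Schwarz inequality $|\langle Ax,x\rangle|^{2}\le \langle|A|x,x\rangle\langle|A^{*}|x,x\rangle$, which follows from the polar decomposition $A=U|A|$ together with the identity $|A^{*}|=U|A|U^{*}$ and ordinary Cauchy--Schwarz. Combined with the scalar AM--GM inequality, this yields
\[|\langle Ax,x\rangle|\le \frac{\langle|A|x,x\rangle+\langle|A^{*}|x,x\rangle}{2}=\left\langle\frac{|A|+|A^{*}|}{2}x,x\right\rangle.\]
Applying the increasing function $f$ preserves this estimate.

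The key refinement step uses that $\tfrac{|A|+|A^{*}|}{2}$ is exactly the midpoint of the two operators $\tfrac{3|A|+|A^{*}|}{4}$ and $\tfrac{|A|+3|A^{*}|}{4}$; taking inner products with $x$, the same midpoint identity holds for the scalars $\alpha:=\langle\tfrac{3|A|+|A^{*}|}{4}x,x\rangle$ and $\beta:=\langle\tfrac{|A|+3|A^{*}|}{4}x,x\rangle$. Scalar convexity of $f$ then gives
\[f\!\left(\left\langle\tfrac{|A|+|A^{*}|}{2}x,x\right\rangle\right)=f\!\left(\tfrac{\alpha+\beta}{2}\right)\le \tfrac{1}{2}f(\alpha)+\tfrac{1}{2}f(\beta),\]
and applying \eqref{convex_inner} to each of $f(\alpha)$ and $f(\beta)$ lifts these scalar upper bounds to
\[f(\alpha)\le \left\langle f\!\left(\tfrac{3|A|+|A^{*}|}{4}\right)x,x\right\rangle,\qquad f(\beta)\le \left\langle f\!\left(\tfrac{|A|+3|A^{*}|}{4}\right)x,x\right\rangle.\]

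Finally, I would assemble the chain and take the supremum over $\|x\|=1$. The left-hand side becomes $f(w(A))$ by monotonicity and continuity of $f$; the right-hand side is the supremum of $\langle Tx,x\rangle$, where $T:=f(\tfrac{3|A|+|A^{*}|}{4})+f(\tfrac{|A|+3|A^{*}|}{4})$ is a positive operator (since $f\ge 0$), so this supremum equals $\|T\|$, which is exactly the asserted bound. The main delicacy is that $f$ is only assumed convex, \emph{not} operator convex, so the operator form of Hermite--Hadamard is unavailable; the workaround is precisely to run the midpoint-convexity step at the scalar level on $\alpha,\beta$ and to invoke \eqref{convex_inner} only at the very end to reintroduce the operator structure. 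This is also what allows the hypothesis to be weakened compared with \eqref{needed_fom_first_paper}.
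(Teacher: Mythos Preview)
Your proof is correct and follows essentially the same route as the paper: mixed Schwarz plus AM--GM to bound $|\langle Ax,x\rangle|$ by $\langle\tfrac{|A|+|A^{*}|}{2}x,x\rangle$, scalar midpoint convexity applied to the pair $\langle\tfrac{3|A|+|A^{*}|}{4}x,x\rangle$, $\langle\tfrac{|A|+3|A^{*}|}{4}x,x\rangle$, then \eqref{convex_inner} and a supremum. Your commentary on why only scalar convexity (not operator convexity) is needed, and your explicit justification that $\sup_{\|x\|=1}\langle Tx,x\rangle=\|T\|$ because $T\ge 0$, are welcome clarifications that the paper leaves implicit.
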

\begin{proof}
It is easy to see that if $f:J\to \mathbb{R}$ is a convex function and $a,b\in J$,
\[\begin{aligned}
 f\left( \frac{a+b}{2} \right)&=f\left( \frac{1}{2}\left( \frac{3a+b}{4}+\frac{a+3b}{4} \right) \right) \\ 
& \le \frac{1}{2}\left[ f\left( \frac{3a+b}{4} \right)+f\left( \frac{a+3b}{4} \right) \right].  
\end{aligned}\] 
Let $x\in \mathcal{H}$ be a unit vector. Replacing $a$ and $b$ by $\left\langle \left| A \right|x,x \right\rangle $ and $\left\langle \left| {{A}^{*}} \right|x,x \right\rangle $ in the above inequality, we get
\begin{equation}\label{9}
\begin{aligned}
& f\left( \left\langle \frac{\left| A \right|+\left| {{A}^{*}} \right|}{2}x,x \right\rangle  \right) \\ 
& \le \frac{1}{2}\left[ f\left( \left\langle \frac{3\left| A \right|+\left| {{A}^{*}} \right|}{4}x,x \right\rangle  \right)+f\left( \left\langle \frac{\left| A \right|+3\left| {{A}^{*}} \right|}{4}x,x \right\rangle  \right) \right] \\ 
& \le \frac{1}{2}\left[ \left\langle f\left( \frac{3\left| A \right|+\left| {{A}^{*}} \right|}{4} \right)x,x \right\rangle +\left\langle f\left( \frac{\left| A \right|+3\left| {{A}^{*}} \right|}{4} \right)x,x \right\rangle  \right] \quad \text{(by \eqref{convex_inner})}\\ 
& =\frac{1}{2}\left\langle \left\{f\left( \frac{3\left| A \right|+\left| {{A}^{*}} \right|}{4} \right)+f\left( \frac{\left| A \right|+3\left| {{A}^{*}} \right|}{4} \right)\right\}x,x \right\rangle.  \\ 
\end{aligned}
\end{equation}
On the other hand, since $f$ is increasing,
\begin{equation}\label{10}
\begin{aligned}
&	f\left( \left| \left\langle Ax,x \right\rangle  \right| \right)\\
&\le f\left( \sqrt{\left\langle \left| A \right|x,x \right\rangle \left\langle \left| {{A}^{*}} \right|x,x \right\rangle } \right) \quad \text{(by the mixed Schwarz inequality \cite[pp 75-76]{5})}\\ 
& \le f\left( \left\langle \frac{\left| A \right|+\left| {{A}^{*}} \right|}{2}x,x \right\rangle  \right) \quad \text{(by the arithmetic-geometric mean inequality)}.  
\end{aligned}
\end{equation}
Combining \eqref{9} and \eqref{10} we get 
\[f\left( \left| \left\langle Ax,x \right\rangle  \right| \right)\le \frac{1}{2}\left\langle \left\{f\left( \frac{3\left| A \right|+\left| {{A}^{*}} \right|}{4} \right)+f\left( \frac{\left| A \right|+3\left| {{A}^{*}} \right|}{4} \right)\right\}x,x \right\rangle \]
for any unit vector $x\in \mathcal{H}$. By taking supremum we have 
\[f\left( w\left( A \right) \right)\le \frac{1}{2}\left\| f\left( \frac{3\left| A \right|+\left| {{A}^{*}} \right|}{4} \right)+f\left( \frac{\left| A \right|+3\left| {{A}^{*}} \right|}{4} \right) \right\|.\]
This completes the proof of the theorem.
\end{proof}
The fact that Theorem \ref{new_thm} improves \eqref{needed_fom_first_paper} is justified in the following proposition.
\begin{proposition}\label{new_prop}
	Let $A\in \mathcal{B}\left( \mathcal{H} \right)$. If $f:\left[ 0,\infty  \right)\to \left[ 0,\infty  \right)$ is an  operator convex function, then
	\begin{equation}\label{8}
\frac{1}{2}\left\| f\left( \frac{3\left| A \right|+\left| {{A}^{*}} \right|}{4} \right)+f\left( \frac{\left| A \right|+3\left| {{A}^{*}} \right|}{4} \right) \right\|\le \left\| \int_{0}^{1}{f\left( t\left| A \right|+\left( 1-t \right)\left| {{A}^{*}} \right| \right)dt} \right\|.
\end{equation}
\end{proposition}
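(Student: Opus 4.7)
The plan is to derive the proposition as a direct consequence of the operator Hermite--Hadamard-type inequality \eqref{1} quoted earlier in the paper, combined with the elementary fact that the operator norm is monotone on positive operators.

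More precisely, since $f:[0,\infty)\to[0,\infty)$ is operator convex, the hypothesis of \eqref{1} is satisfied for any two self-adjoint operators with spectra in $[0,\infty)$. First I would apply the second inequality in \eqref{1},
\[
\frac{1}{2}\left[ f\!\left( \frac{3X+Y}{4} \right)+f\!\left( \frac{X+3Y}{4} \right) \right] \le \int_{0}^{1} f\!\left( (1-t)X+tY \right)dt,
\]
with the choice $X=|A^{*}|$ and $Y=|A|$ (both are self-adjoint with non-negative spectrum). This immediately produces the operator inequality
\[
\frac{1}{2}\left[ f\!\left( \frac{3|A|+|A^{*}|}{4} \right)+f\!\left( \frac{|A|+3|A^{*}|}{4} \right) \right] \le \int_{0}^{1} f\!\left( t|A|+(1-t)|A^{*}| \right)dt,
\]
where the integral on the right is the same as the one appearing in \eqref{8} (one may equivalently substitute $t\mapsto 1-t$ in the integrand).

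The second step is to take operator norms of both sides. Because $f$ is non-negative on $[0,\infty)$ and $|A|,|A^{*}|\ge 0$, every operator appearing above is positive semi-definite; in particular, both sides of the operator inequality are positive operators. Using the standard fact that $0\le P\le Q$ implies $\|P\|\le\|Q\|$, the desired norm inequality \eqref{8} follows at once.

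There is essentially no obstacle in this argument: the work has already been done in \eqref{1}, and the only point to verify carefully is the identification of $X,Y$ and the justification that the passage from an operator inequality to a norm inequality is legitimate here because both sides are positive. It is worth noting that operator convexity (rather than mere convexity) is essential, since \eqref{1} itself requires operator convexity of $f$; this is reflected in the hypothesis of the proposition.
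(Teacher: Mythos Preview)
Your proof is correct and follows essentially the same approach as the paper: apply the second inequality in \eqref{1} with $|A|$ and $|A^{*}|$ in place of the self-adjoint operators, then pass to norms. Your explicit justification that both sides are positive (so that $0\le P\le Q$ implies $\|P\|\le\|Q\|$) is a detail the paper leaves implicit.
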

\begin{proof}
By the second inequality in \eqref{1},
	\[\frac{1}{2}\left[ f\left( \frac{3A+B}{4} \right)+f\left( \frac{A+3B}{4} \right) \right]\le \int_{0}^{1}{f\left( tA+\left( 1-t \right)B \right)dt}\] 
	where $A$ and $B$ are two self adjoint operators with the spectra in $J$ and $f$ is operator convex on $J$. Replacing $A$ and $B$ by $\left| A \right|$ and $\left| {{A}^{*}} \right|$, respectively, we get 
	\[ \frac{1}{2}\left[ f\left( \frac{3\left| A \right|+\left| {{A}^{*}} \right|}{4} \right)+f\left( \frac{\left| A \right|+3\left| {{A}^{*}} \right|}{4} \right) \right]\le \int_{0}^{1}{f\left( t\left| A \right|+\left( 1-t \right)\left| {{A}^{*}} \right| \right)dt}.\] 
	From this we infer \eqref{8}
\end{proof}
Letting $f(t)=t^2,$ Theorem \ref{new_thm} together with Proposition \ref{new_prop} impliy the following two-term refinement of the right inequality in \eqref{kittaneh_first_ineq}.

\begin{corollary}
Let $A\in \mathcal{B}\left( \mathcal{H} \right)$. Then
\begin{align}
 {{w}^{2}}\left( A \right)&\le \frac{1}{32}\left\| {{\left( 3\left| A \right|+\left| {{A}^{*}} \right| \right)}^{2}}+{{\left( \left| A \right|+3\left| {{A}^{*}} \right| \right)}^{2}} \right\| \label{11}\\ 
& \le \left\| \int_{0}^{1}{{{\left( t\left| A \right|+\left( 1-t \right)\left| {{A}^{*}} \right| \right)}^{2}}dt} \right\| \nonumber \\ 
& \le \frac{1}{2}\left\| {{A}^{*}}A+A{{A}^{*}} \right\| \nonumber.  
\end{align}
\end{corollary}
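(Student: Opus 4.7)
The proof plan is to chain together three ingredients already established in the paper, applied to the specific choice $f(t)=t^2$. First I would verify that $t^2$ fits the hypotheses needed at each stage: it is increasing and convex on $[0,\infty)$ (which is what Theorem \ref{new_thm} requires), and it is moreover operator convex on $[0,\infty)$ since $r=2 \in [1,2]$ (which is what Proposition \ref{new_prop} and inequality \eqref{1} require).

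For the first inequality, I would apply Theorem \ref{new_thm} with $f(t)=t^2$ directly, obtaining
\[
w^2(A) \le \frac{1}{2}\left\| \left(\frac{3|A|+|A^*|}{4}\right)^2 + \left(\frac{|A|+3|A^*|}{4}\right)^2 \right\|,
\]
and pulling out the factor $\frac{1}{16}$ yields the constant $\frac{1}{32}$ in \eqref{11}. For the second inequality, I would invoke Proposition \ref{new_prop} with the same $f$, which is applicable because $t^2$ is operator convex, giving immediately
\[
\frac{1}{32}\left\| (3|A|+|A^*|)^2 + (|A|+3|A^*|)^2 \right\| \le \left\| \int_0^1 (t|A|+(1-t)|A^*|)^2 \, dt \right\|.
\]

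The third inequality is the one demanding a short independent argument. My approach is to use the last inequality in the chain \eqref{1}, applied to the self-adjoint operators $X=|A|$ and $Y=|A^*|$ with $f(t)=t^2$ (operator convex on $[0,\infty)$), which yields the operator inequality
\[
\int_0^1 (t|A|+(1-t)|A^*|)^2 \, dt \le \frac{|A|^2+|A^*|^2}{2} = \frac{A^*A+AA^*}{2}.
\]
Since both sides are positive operators, taking norms preserves the inequality and produces the bound $\frac{1}{2}\|A^*A+AA^*\|$, completing the chain.

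The main conceptual obstacle is simply to keep track of which convexity hypothesis is in force at each step: Theorem \ref{new_thm} only needed scalar convexity, whereas Proposition \ref{new_prop} and \eqref{1} require operator convexity, and it is the particular feature of $f(t)=t^2$ that it satisfies both, so the three inequalities can be composed for this choice. Beyond that, the only computation is the constant $\frac{1}{16}\cdot\frac{1}{2}=\frac{1}{32}$ emerging from squaring the $\frac{1}{4}$ factors inside Theorem \ref{new_thm}.
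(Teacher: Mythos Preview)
Your proposal is correct and follows exactly the approach the paper intends: apply Theorem~\ref{new_thm} and Proposition~\ref{new_prop} with $f(t)=t^{2}$ to get the first two inequalities, and for the third use operator convexity of $t^{2}$ via \eqref{1} (equivalently, the second inequality in \eqref{needed_fom_first_paper}) together with $|A|^{2}+|A^{*}|^{2}=A^{*}A+AA^{*}$. The only bookkeeping is the constant $\tfrac{1}{2}\cdot\tfrac{1}{16}=\tfrac{1}{32}$, which you handled correctly.
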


\begin{remark}
The constant $\frac{1}{32}$ is best possible in \eqref{11}. Actually, if we assume that \eqref{11} holds with a constant $C>0$, i.e.,
\begin{equation}\label{7}
{{w}^{2}}\left( A \right)\le C\left\| {{\left( 3\left| A \right|+\left| {{A}^{*}} \right| \right)}^{2}}+{{\left( \left| A \right|+3\left| {{A}^{*}} \right| \right)}^{2}} \right\|
\end{equation}
for any $A\in \mathcal{B}\left( \mathcal{H} \right)$, then if we choose $A$ a normal operator and use the fact that for normal operators we have $w\left( A \right)=\left\| A \right\|$, then by \eqref{7} we deduce that $\frac{1}{32}\le C$ which proves the sharpness of the constant.
\end{remark}

\subsection{Some additive refinements}
We have already seen that Corollary \ref{cor_first} refines the left side inequality in \eqref{kittaneh_first_ineq}. The refinement this corollary presents was based on a convexity approach and the refining term contains an operator integral. In the next result, we use a different approach to present a new refinement of the first inequality in \eqref{kittaneh_first_ineq}. The main tool will be the basic inequality

\begin{equation}\label{sum_diff_pos}
{\left( \frac{X+Y}{2} \right)}^{2}\leq {{\left( \frac{X+Y}{2} \right)}^{2}}+{{\left( \frac{\left| X-Y \right|}{2} \right)}^{2}}= \frac{{{X}^{2}}+{{Y}^{2}}}{2}
\end{equation}
valid for the self adjoint operators $X$ and $Y$.

\begin{theorem}
Let $A\in\mathcal{B}(\mathcal{H})$. Then
\[\frac{1}{4}\left\| {{A}^{*}}A+A{{A}^{*}} \right\|\le \frac{1}{4}{{\left\| {{\left( {{A}^{*}}A+A{{A}^{*}} \right)}^{2}}+{{\left| {{A}^{2}}+{{\left( {{A}^{*}} \right)}^{2}} \right|}^{2}} \right\|}^{\frac{1}{2}}}\le {{w}^{2}}\left( A \right).\]
\end{theorem}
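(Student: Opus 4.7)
My plan is to reduce the expression $(A^*A+AA^*)^2 + |A^2+(A^*)^2|^2$ inside the middle norm to a clean combination of $B^4$ and $C^4$ via the suggested identity \eqref{sum_diff_pos}, and then attack the two inequalities separately: the lower bound by positivity, the upper bound by Lemma \ref{3}.

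First I would set $X:=A^*A+AA^*$ and $Y:=A^2+(A^*)^2$. Both are self-adjoint (with $X\ge 0$), so $|Y|^2=Y^2$. Substituting the Cartesian decomposition $A=B+iC$ gives $X=2(B^2+C^2)$ and $Y=2(B^2-C^2)$, hence $\frac{X+Y}{2}=2B^2$ and $\frac{X-Y}{2}=2C^2$. The identity half of \eqref{sum_diff_pos} then collapses the middle expression to
\[
X^2+|Y|^2 \;=\; 2\left[\left(\frac{X+Y}{2}\right)^{2}+\left(\frac{X-Y}{2}\right)^{2}\right]\;=\;8\bigl(B^4+C^4\bigr).
\]

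The left inequality follows at once: $|Y|^2\ge 0$ forces $X^2\le X^2+|Y|^2$, so $\|X\|^2=\|X^2\|\le\|X^2+|Y|^2\|$, and taking square roots and dividing by $4$ produces the desired bound. For the right inequality, the triangle inequality combined with the squared form $\|B\|^4\le w^4(A)$, $\|C\|^4\le w^4(A)$ of Lemma \ref{3} gives
\[
\|X^2+|Y|^2\|\;=\;8\,\|B^4+C^4\|\;\le\;8\bigl(\|B\|^4+\|C\|^4\bigr)\;\le\;16\,w^4(A),
\]
and a final square root and division by $4$ close the proof.

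The one point warranting attention — and hence the main (if mild) obstacle — is the identity $(B^2+C^2)^2+(B^2-C^2)^2=2(B^4+C^4)$, which must be verified without assuming that $B$ and $C$ commute. It does hold: the cross terms $B^2C^2$ and $C^2B^2$ appear with opposite signs and cancel exactly, which is what makes \eqref{sum_diff_pos} applicable here and drives the whole argument. Everything else is mechanical, so I would expect the entire proof to occupy only a short paragraph in the paper.
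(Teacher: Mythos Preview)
Your proof is correct and follows essentially the same route as the paper: both arguments use the Cartesian decomposition to identify $A^*A+AA^*=2(B^2+C^2)$ and $A^2+(A^*)^2=2(B^2-C^2)$, invoke the identity in \eqref{sum_diff_pos} (with $X=B^2$, $Y=C^2$) to reduce the middle term to $8(B^4+C^4)$, obtain the left inequality from positivity of the $|Y|^2$ term, and the right inequality from the triangle inequality together with Lemma~\ref{3}. Your explicit remark that the cross terms $B^2C^2$ and $C^2B^2$ cancel without any commutativity assumption is exactly what underlies \eqref{sum_diff_pos}, so there is no gap.
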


\begin{proof}
Let $A=B+iC$ be the Cartesian decomposition of $A$. Then 
\begin{equation}\label{needed_last_1}
{{B}^{2}}+{{C}^{2}}=\frac{{{A}^{*}}A+A{{A}^{*}}}{2}\text{ and }{{B}^{2}}-{{C}^{2}}=\frac{{{A}^{2}}+{{\left( {{A}^{*}} \right)}^{2}}}{2}.
\end{equation}
Replacing $X$ and $Y$ by ${{B}^{2}}$ and ${{C}^{2}}$ respectively in \eqref{sum_diff_pos}, we get 	
\[\left( \frac{{{B}^{2}}+{{C}^{2}}}{2} \right)^2\leq {{\left( \frac{{{B}^{2}}+{{C}^{2}}}{2} \right)}^{2}}+{{\left( \frac{\left| {{B}^{2}}-{{C}^{2}} \right|}{2} \right)}^{2}}= \frac{{{B}^{4}}+{{C}^{4}}}{2}.\]
Consequently,
\[\left\|  \left( \frac{{{B}^{2}}+{{C}^{2}}}{2} \right)\right\|^{2}\leq \left\| {{\left( \frac{{{B}^{2}}+{{C}^{2}}}{2} \right)}^{2}}+{{\left( \frac{\left| {{B}^{2}}-{{C}^{2}} \right|}{2} \right)}^{2}} \right\|= \left\| \frac{{{B}^{4}}+{{C}^{4}}}{2} \right\|.\]
Now, if $A\in \mathcal{B}\left( \mathcal{H} \right)$ have the Cartesian decomposition $A=B+iC$, \eqref{needed_last_1} implies 
\begin{align*}
\frac{1}{16}\left\| {{A}^{*}}A+A{{A}^{*}} \right\|^2&\le \frac{1}{16}{{\left\| {{\left( {{A}^{*}}A+A{{A}^{*}} \right)}^{2}}+{{\left| {{A}^{2}}+{{\left( {{A}^{*}} \right)}^{2}} \right|}^{2}} \right\|}}\\
&= \left\| \frac{{{B}^{4}}+{{C}^{4}}}{2} \right\|\\
&\leq\frac{\|B\|^4+\|C\|^4}{2}\\
&\le {{w}^{4}}\left( A \right),
\end{align*}
where we have used Lemma \ref{3} to obtain the last inequality. This completes the proof.

\end{proof}

Our last result in this approach will be extending the inequality 
\begin{equation}\label{had_kitt_ineq}
w^p(A)\leq \left\| \left( 1-t \right){{\left| A \right|}^{p}}+t{{\left| {{A}^{*}} \right|}^{p}} \right\|,\quad 2\leq p\leq 4
\end{equation}
which was shown in \cite[Theorem 2]{02}. The approach we use here is again a convexity approach; which means that we present the main result in terms of convex or operator convex functions, then we deduce the desired refinement as a special case.
For this result, we will need the following lemma.
\begin{lemma}\label{lemma_sab}(\cite[Lemma 3.12]{01})
Let $f:\mathbb{R}\to\mathbb{R}$ be operator convex, $A,B$ be two Hermitian matrices in $\mathbb{M}_n$  and let $0\leq t \leq 1$. Then
\begin{eqnarray*}
f\left((1-t)A+t B\right)&+&2r\left(f(A)\nabla f(B)-f(A\nabla B)\right)\leq (1-t)f(A)+t f(B),
\end{eqnarray*}
where $r=\min\{t,1-t\}$ and $A\nabla B=\frac{A+B}{2}.$
\end{lemma}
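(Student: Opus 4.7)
The plan is to reduce the operator inequality to two applications of the defining two-point operator convexity condition, exploiting the symmetry between the cases $t\le \tfrac12$ and $t\ge \tfrac12$. The key observation is purely algebraic: when $t\le \tfrac12$ (so that $r=t$), the point $(1-t)A+tB$ lies on the segment joining $A$ to the midpoint $A\nabla B=\tfrac{A+B}{2}$; explicitly,
\[
(1-t)A+tB \;=\; (1-2t)\,A \;+\; 2t\,(A\nabla B).
\]
This rewriting is the heart of the matter, because it produces a genuine convex combination (the coefficients $1-2t$ and $2t$ are nonnegative, using $t\le \tfrac12$) of the two Hermitian matrices $A$ and $A\nabla B$, whose spectra still lie in the interval on which $f$ is operator convex.

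Next I would apply operator convexity of $f$ to this decomposition to obtain
\[
f\bigl((1-t)A+tB\bigr) \;\le\; (1-2t)\,f(A) \;+\; 2t\,f(A\nabla B).
\]
Adding the correction term $2r\bigl(f(A)\nabla f(B) - f(A\nabla B)\bigr)= 2t\bigl(\tfrac{f(A)+f(B)}{2}-f(A\nabla B)\bigr)$ to both sides produces, after cancellation of the $\pm 2t\,f(A\nabla B)$ contributions,
\[
f\bigl((1-t)A+tB\bigr) + 2r\bigl(f(A)\nabla f(B) - f(A\nabla B)\bigr) \;\le\; (1-2t)f(A)+t\bigl(f(A)+f(B)\bigr) \;=\; (1-t)f(A)+tf(B),
\]
which is the claimed inequality.

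For the symmetric case $t\ge \tfrac12$ (so $r=1-t$), I would swap the roles of $A$ and $B$: write $(1-t)A+tB = (2t-1)B + 2(1-t)(A\nabla B)$, which is again a convex combination of two Hermitians, apply operator convexity to get $f((1-t)A+tB)\le (2t-1)f(B)+2(1-t)f(A\nabla B)$, and then add $2(1-t)(f(A)\nabla f(B)-f(A\nabla B))$ to conclude. The two cases agree at $t=\tfrac12$, where both sides reduce to the midpoint operator convex inequality.

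I do not anticipate a real obstacle: the whole argument rests on one algebraic rewriting followed by a single invocation of operator convexity applied to just two matrices, so no multivariate operator convexity or functional calculus subtleties arise. The only thing to be mindful of is checking that all convex combinations formed really have nonnegative weights, which is exactly what the definition $r=\min\{t,1-t\}$ is designed to ensure; this is why the refinement term $2r$, rather than $2t$ or $2(1-t)$, appears in the statement.
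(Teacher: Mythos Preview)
Your argument is correct: the algebraic identity $(1-t)A+tB=(1-2t)A+2t(A\nabla B)$ for $t\le\tfrac12$ (and its mirror image for $t\ge\tfrac12$) reduces the claim to a single application of operator convexity, and the cancellation you describe is exact. The paper does not supply its own proof of this lemma---it simply quotes it from \cite[Lemma~3.12]{01}---so there is no in-paper argument to compare against; your proof is precisely the standard one given in that reference.
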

Now we prove our last result.
\begin{theorem} Let $A\in\mathcal{B}(\mathcal{H})$ and let $f:[0,\infty)\to [0,\infty)$ be an increasing operator convex function. Then
\begin{equation}\label{wanted_last_1}
\begin{aligned}
& f\left( {{w}^{2}}\left( A \right) \right) \\ 
& \le \left\| \left( 1-t \right)f\left( {{\left| A \right|}^{2}} \right)+tf\left( {{\left| {{A}^{*}} \right|}^{2}} \right)-2r\left( \frac{f\left( {{\left| A \right|}^{2}} \right)+f\left( {{\left| {{A}^{*}} \right|}^{2}} \right)}{2}-f\left( \frac{{{\left| A \right|}^{2}}+{{\left| {{A}^{*}} \right|}^{2}}}{2} \right) \right) \right\| \\ 
& \le \left\| \left( 1-t \right)f\left( {{\left| A \right|}^{2}} \right)+tf\left( {{\left| {{A}^{*}} \right|}^{2}} \right) \right\|.
\end{aligned}
\end{equation}
In particular,
\begin{equation}\label{wanted_last_2}
\begin{aligned}
 {{w}^{p}}\left( A \right)&\le \left\| \left( 1-t \right){{\left| A \right|}^{p}}+t{{\left| {{A}^{*}} \right|}^{p}}-2r\left( \frac{{{\left| A \right|}^{p}}+{{\left| {{A}^{*}} \right|}^{p}}}{2}-{{\left( \frac{{{\left| A \right|}^{2}}+{{\left| {{A}^{*}} \right|}^{2}}}{2} \right)}^{\frac{p}{2}}} \right) \right\| \\ 
& \le \left\| \left( 1-t \right){{\left| A \right|}^{p}}+t{{\left| {{A}^{*}} \right|}^{p}} \right\|  
\end{aligned}
\end{equation}
for $2\le p\le 4$.
\end{theorem}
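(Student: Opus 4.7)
The plan is to combine the $p=2$ instance of the Hirzallah--Kittaneh inequality \eqref{had_kitt_ineq}, namely $w^2(A)\le\|(1-t)|A|^2+t|A^*|^2\|$, with Lemma~\ref{lemma_sab} applied to the positive (hence Hermitian) pair $|A|^2,|A^*|^2$. The special case \eqref{wanted_last_2} then drops out by taking $f(z)=z^{p/2}$, which is increasing and operator convex on $[0,\infty)$ precisely when $p\in[2,4]$.

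First, since $f$ is increasing and satisfies $\|f(|X|)\|=f(\|X\|)$ for self-adjoint $X$, the scalar bound $w^2(A)\le\|(1-t)|A|^2+t|A^*|^2\|$ upgrades under $f$ to
\[
f(w^2(A))\;\le\;f\bigl(\|(1-t)|A|^2+t|A^*|^2\|\bigr)\;=\;\|f((1-t)|A|^2+t|A^*|^2)\|.
\]

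Next I would apply Lemma~\ref{lemma_sab} with $|A|^2,|A^*|^2$ in place of $A,B$ and rearrange to obtain
\[
f\bigl((1-t)|A|^2+t|A^*|^2\bigr)\;\le\;M,
\]
where $M$ is the operator inside the middle norm of \eqref{wanted_last_1}, namely
\[
M=(1-t)f(|A|^2)+tf(|A^*|^2)-2r\Bigl(\tfrac{f(|A|^2)+f(|A^*|^2)}{2}-f\bigl(\tfrac{|A|^2+|A^*|^2}{2}\bigr)\Bigr).
\]
Operator convexity of $f$ forces the parenthesised Jensen defect to be a positive operator, and the hypothesis $f\ge 0$ then gives $M\ge 0$. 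Thus
\[
0\;\le\;f\bigl((1-t)|A|^2+t|A^*|^2\bigr)\;\le\;M\;\le\;(1-t)f(|A|^2)+tf(|A^*|^2)
\]
is a chain of positive operators, and on positive operators the operator norm is order-preserving. Combining with the previous display produces \eqref{wanted_last_1}, and specialising to $f(z)=z^{p/2}$ yields \eqref{wanted_last_2}.

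The main obstacle is the careful positivity bookkeeping: every operator in the chain must be shown to be positive so that the norm respects the operator order. The positivity of the Jensen defect is exactly the operator convexity of $f$, and the assumption $f\ge 0$ handles the rest. Once this is verified, the proof is essentially a one-line insertion of Lemma~\ref{lemma_sab} between the known Kittaneh bound (at $p=2$) and the norm.
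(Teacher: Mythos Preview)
Your argument is correct and follows the same overall structure as the paper: obtain $f(w^{2}(A))\le\|f((1-t)|A|^{2}+t|A^{*}|^{2})\|$, then sandwich with Lemma~\ref{lemma_sab} and use that the operator norm is monotone on positive operators. The only difference is in the first step: the paper re-derives this bound from scratch via the inner-product chain (mixed Schwarz, then the McCarthy inequality $\langle|A|^{2(1-t)}x,x\rangle\le\langle|A|^{2}x,x\rangle^{1-t}$, then weighted AM--GM, then sup), whereas you invoke the $p=2$ case of \eqref{had_kitt_ineq} as a black box and push $f$ through using $\|f(X)\|=f(\|X\|)$ for positive $X$ and increasing $f$. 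Your shortcut is cleaner and makes the logical dependence on the cited El-Haddad--Kittaneh result explicit; the paper's version is self-contained but essentially reproves that special case inline. One cosmetic point: your justification ``the hypothesis $f\ge 0$ then gives $M\ge 0$'' is correct but only because you already have $0\le f((1-t)|A|^{2}+t|A^{*}|^{2})\le M$ from the lemma; it would read more cleanly to state the full positive chain first and then take norms.
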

\begin{proof}
Lemma \ref{lemma_sab} implies
\[\begin{aligned}
& f\left( \left( 1-t \right){{\left| A \right|}^{2}}+t{{\left| {{A}^{*}} \right|}^{2}} \right) \\ 
& \le \left( 1-t \right)f\left( {{\left| A \right|}^{2}} \right)+tf\left( {{\left| {{A}^{*}} \right|}^{2}} \right)-2r\left( \frac{f\left( {{\left| A \right|}^{2}} \right)+f\left( {{\left| {{A}^{*}} \right|}^{2}} \right)}{2}-f\left( \frac{{{\left| A \right|}^{2}}+{{\left| {{A}^{*}} \right|}^{2}}}{2} \right) \right) \\ 
& \le \left( 1-t \right)f\left( {{\left| A \right|}^{2}} \right)+tf\left( {{\left| {{A}^{*}} \right|}^{2}} \right).
\end{aligned}\]
Therefore,
\[\begin{aligned}
& \left\| f\left( \left( 1-t \right){{\left| A \right|}^{2}}+t{{\left| {{A}^{*}} \right|}^{2}} \right) \right\| \\ 
& \le \left\| \left( 1-t \right)f\left( {{\left| A \right|}^{2}} \right)+tf\left( {{\left| {{A}^{*}} \right|}^{2}} \right)-2r\left( \frac{f\left( {{\left| A \right|}^{2}} \right)+f\left( {{\left| {{A}^{*}} \right|}^{2}} \right)}{2}-f\left( \frac{{{\left| A \right|}^{2}}+{{\left| {{A}^{*}} \right|}^{2}}}{2} \right) \right) \right\| \\ 
& \le \left\| \left( 1-t \right)f\left( {{\left| A \right|}^{2}} \right)+tf\left( {{\left| {{A}^{*}} \right|}^{2}} \right) \right\|.
\end{aligned}\]
On the other hand,
\[\begin{aligned}
 f\left( {{\left| \left\langle Ax,x \right\rangle  \right|}^{2}} \right)&\le f\left( \left\langle {{\left| A \right|}^{2\left( 1-t \right)}}x,x \right\rangle \left\langle {{\left| {{A}^{*}} \right|}^{2t}}x,x \right\rangle  \right) \\ 
& \le f\left( {{\left\langle {{\left| A \right|}^{2}}x,x \right\rangle }^{1-t}}{{\left\langle {{\left| {{A}^{*}} \right|}^{2}}x,x \right\rangle }^{t}} \right) \\ 
& \le f\left( \left( 1-t \right)\left\langle {{\left| A \right|}^{2}}x,x \right\rangle +t\left\langle {{\left| {{A}^{*}} \right|}^{2}}x,x \right\rangle  \right) \\ 
& =f\left( \left\langle \left( 1-t \right){{\left| A \right|}^{2}}+t{{\left| {{A}^{*}} \right|}^{2}}x,x \right\rangle  \right)  
\end{aligned}\]
for any unit vector $x\in \mathcal{H}$. This implies that
\[f\left( {{w}^{2}}\left( A \right) \right)\le \left\| f\left( \left( 1-t \right){{\left| A \right|}^{2}}+t{{\left| {{A}^{*}} \right|}^{2}} \right) \right\|.\]
Consequently,
\[\begin{aligned}
& f\left( {{w}^{2}}\left( A \right) \right) \\ 
& \le \left\| \left( 1-t \right)f\left( {{\left| A \right|}^{2}} \right)+tf\left( {{\left| {{A}^{*}} \right|}^{2}} \right)-2r\left( \frac{f\left( {{\left| A \right|}^{2}} \right)+f\left( {{\left| {{A}^{*}} \right|}^{2}} \right)}{2}-f\left( \frac{{{\left| A \right|}^{2}}+{{\left| {{A}^{*}} \right|}^{2}}}{2} \right) \right) \right\| \\ 
& \le \left\| \left( 1-t \right)f\left( {{\left| A \right|}^{2}} \right)+tf\left( {{\left| {{A}^{*}} \right|}^{2}} \right) \right\|.
\end{aligned}\]
This proves \eqref{wanted_last_1}. For \eqref{wanted_last_2}, let $f(t)=t^r, 1\leq r\leq 2,$ and apply \eqref{wanted_last_1}.
\end{proof}

{\tiny \vskip 0.3 true cm }

{\tiny (H. R. Moradi) Department of Mathematics, Payame Noor University (PNU), P.O. Box 19395-4697, Tehran, Iran.}

{\tiny \textit{E-mail address:} hrmoradi@mshdiau.ac.ir }

{\tiny \vskip 0.3 true cm }

{\tiny (M. Sababheh) Department of Basic Sciences, Princess Sumaya University For Technology, Al Jubaiha, Amman 11941, Jordan.}

{\tiny \textit{E-mail address:} sababheh@psut.edu.jo}
%-----------------------------------------------------------------------------
%-----------------------------------------------------------------------------

\begin{thebibliography}{9}
\bibitem{11}
R. Bhatia, {\it Positive definite matrices}. Vol. 16. Princeton university press, 2009.

\bibitem{bourin} J.-C. Bourin and E.-Y. Lee, {\it Unitary orbits of Hermitian operators with convex or concave functions}, Bulletin London Math. Soc., {\bf 44}(2012), 1085-1102.
\bibitem{4}
S. S. Dragomir, {\it Hermite-Hadamard's type inequalities for operator convex functions}, Appl. Math. Comput., {\bf218}(3) (2011), 766--772.


\bibitem{02}
M. El-Haddad and F. Kittaneh, {\it Numerical radius inequalities for Hilbert space operators. II}, Studia Math., {\bf182}(2) (2007), 133--140.

\bibitem{5}
P . R. Halmos, {\it A Hilbert space problem book}, 2nd ed., Springer, New York, 1982.


\bibitem{2}
F. Kittaneh, {\it A numerical radius inequality and an estimate for the numerical radius of the Frobenius companion matrix}, Studia Math.,  {\bf158}(1) (2003), 11--17.

	
\bibitem{1}
F. Kittaneh, {\it Numerical radius inequalities for Hilbert space operators}, Studia Math., {\bf168}(1) (2005), 73--80.

\bibitem{3}
F. Kittaneh, {\it Numerical radius inequalities associated with the Cartesian decomposition}, Math. Inequal. Appl., {\bf18}(3) (2015), 915--922.






\bibitem{6}
B. Mond and J. Pe\v cari\'c, {\it On Jensen's inequality for operator convex functions}, Houston J. Math., {\bf21} (1995), 739--753.

\bibitem{10}
M. E. Omidvar, H. R. Moradi and K. Shebrawi, {\it Sharpening some classical numerical radius inequalities}, Oper. Matrices., {\bf12}(2) (2018), 407--416.

\bibitem{01}
M. Sababheh, {\it Convexity and matrix means}, Linear Algebra Appl., {\bf506} (2016), 588--602.


\bibitem{9}
M. Sababheh, {\it Numerical radius inequalities via convexity}, Linear Algebra Appl., {\bf549} (2018), 67--78.


\bibitem{8}
M. Sababheh, {\it Heinz-type numerical radii inequalities}, Linear Multilinear Algebra., {\bf67}(5) (2019), 953--964.


\bibitem{7}
M. Sababheh and H. R. Moradi, {\it More accurate numerical radius inequalities}, Linear Multilinear Algebra., accepted.



\end{thebibliography}
\end{document}